\documentclass[10pt]{amsart}
\usepackage{graphicx, amsmath, amssymb, amsthm, amsfonts, mathtools, tikz-cd, comment, tikz, caption, subcaption, url, tabularx, enumitem, mathrsfs, amsrefs, appendix, float}

\newtheorem{introthm}{Theorem}

\newtheorem{thm}{Theorem}[section]
\newtheorem{lem}[thm]{Lemma}

\theoremstyle{definition}
\newtheorem{defn}[thm]{Definition}

\title{Uncountable Abelian Group C*-algebras Fail the Lifting Property}
\author{Miles Gould}

\address{Department of Mathematics,
University of Louisiana at Lafayette,
Lafayette, USA}

\email{miles.gould1@louisiana.edu}

\date{\today}

\begin{document}

\begin{abstract}
    In this note, we show that $C^*(G)$ fails the lifting property (LP) for every discrete group $G$ containing an uncountable abelian subgroup. Consequently, for every uncountable discrete abelian group $G$, the nuclear algebra $C^*(G)$ has the local lifting property (LLP) but fails the LP. This shows that the LP and the LLP do not coincide for full discrete group $C^*$-algebras.
\end{abstract}

\maketitle

\section{Introduction}

The Choi-Effros theorem \cite{CE76} is a fundamental result in $C^*$-algebra theory. Among other things, it says that for a separable, nuclear $C^*$-algebra $A$, every cpc map $\phi:A\rightarrow B/J$ lifts to a cpc map $\theta:A\rightarrow B$. For a general $C^*$-algebra $A$, we call this property the \textit{lifting property} (LP). The Choi-Effros theorem also shows that for every nuclear $C^*$-algebra $A$, every cpc map $\phi: A\rightarrow B/J$, and every finite-dimensional operator subsystem $E$ of $A$, there is a cpc lift $\theta: E\rightarrow B$ of $\phi|_E$.
For a general $C^*$-algebra $A$, we call this the \textit{local lifting property} (LLP).

It is well-known that separability cannot be omitted for the LP claim, even in the abelian case. Indeed, for $\ell^\infty/c_0$, the identity map $\mathrm{id}:\ell^\infty/c_0\rightarrow \ell^\infty/c_0$ does not lift to a cpc map $\theta:\ell^\infty/c_0\rightarrow \ell^\infty$ \cite[\S 3]{OZA}. In this case, the failure of the LP boils down to a failure of the countable chain condition (ccc), that is, $\ell^\infty/c_0$ admits an uncountable family of pairwise orthogonal positive contractions.

However, for discrete, amenable $G$, the full group $C^*$-algebra $C^*(G)=C^*_r(G)$. By \cite[2.5.3]{BROZ}, $C^*_r(G)$ admits a canonical faithful trace, so satisfies the ccc, evading this obstruction. As such, it was posed by Fournier-Facio and Willett \cite[1.13]{WIL} whether LP and LLP are equivalent for full group $C^*$-algebras, particularly whether $C^*(G)$ has the LP for an uncountable free abelian group $G$. In 2004 \cite[\S 3]{OZA}, Ozawa suspected that $C^*(F_\kappa)$ fails the lifting property for uncountable $\kappa$, and this remains open, so the problem is difficult even in the non-nuclear case. We provide a partial negative answer to \cite[1.13]{WIL}, namely that $C^*(G)$ has the LLP yet fails the LP for every uncountable discrete abelian group $G$.

\begin{introthm}\label{disc}[Theorem \ref{disc2}]
    For every discrete group $G$ admitting an uncountable abelian subgroup, $C^*(G)$ fails the lifting property. In particular, $C^*(G)$ fails the lifting property for every uncountable discrete abelian group $G$.
\end{introthm}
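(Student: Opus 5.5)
The plan has three steps. First, reduce to the abelian case. Then identify $C^*(H)$ with $C(\widehat H)$ for a compact group $\widehat H$ of uncountable weight. Finally, build a $*$-homomorphism $C(\widehat H)\to \ell^\infty/c_0$ that admits no cpc lift, by analysing what a lift would have to look like in terms of measures.

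\emph{Reduction.} Let $H\le G$ be an uncountable abelian subgroup. For discrete groups, $C^*(H)\subseteq C^*(G)$ canonically, and there is a conditional expectation $E:C^*(G)\to C^*(H)$ extending $\lambda_g\mapsto 1_H(g)\lambda_g$. This map is ucp because restricting a positive definite function to $H$ and extending it by $0$ stays positive definite. The LP passes to such subalgebras: given a cpc $\phi:C^*(H)\to B/J$, lift $\phi\circ E$ to a cpc $\theta:C^*(G)\to B$ and restrict $\theta$ to $C^*(H)$. Hence it suffices to treat $G=H$ abelian. Replacing $H$ by a subgroup of cardinality $\aleph_1$, which is again legitimate by the same argument, we may assume $|H|=\aleph_1$. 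Then $C^*(H)\cong C(X)$ with $X=\widehat H$ a compact abelian group of weight exactly $\aleph_1$.

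\emph{Embedding.} By Parovi\v{c}enko's theorem, every compact space of weight $\le\aleph_1$ is a continuous image of $\mathbb N^*=\beta\mathbb N\setminus\mathbb N$. A surjection $g:\mathbb N^*\to X$ gives an injective $*$-homomorphism
\[
\pi=g^*:C(X)\to C(\mathbb N^*)\cong \ell^\infty/c_0 .
\]
Now suppose $\theta:C(X)\to\ell^\infty$ is a cpc lift of $\pi$. Then $\theta=(\mu_n)_n$ for a sequence of positive Radon measures $\mu_n$ on $X$ with $\|\mu_n\|\le1$. The lifting condition says precisely that $\lim_{n\to p}\mu_n=\delta_{g(p)}$ weak$^*$ for every $p\in\mathbb N^*$. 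So the continuous extension $\beta\mathbb N\to P(X)$ of $n\mapsto\mu_n$ maps $\mathbb N^*$ onto the Dirac masses. Equivalently, $\mu_n$ becomes asymptotically ``concentrated near $g(p)$'' along every ultrafilter.

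\emph{Contradiction (main obstacle).} Here the freedom in choosing $g$ and the group structure of $X$ must be used; a bare application of Parovi\v{c}enko is not enough. I would first try to choose $g$ so that $\mathbb N^*$ splits into $\aleph_1$ many clopen pieces that are ``independent'' over countable information. For example, write $X$ as an inverse limit $\varprojlim_{\alpha<\omega_1} X_\alpha$ of metrizable compact group quotients, and build $g$ by transfinite induction along this limit. The aim is to diagonalize against all potential countable families $(\mu_n)$ at once.

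The key point to exploit is the following. Each $\mu_n$, and hence the countable family $(\mu_n)$, is determined up to $\varepsilon$ on any given countable set of test functions (characters) by data living on a metrizable quotient $X_\alpha$ with $\alpha<\omega_1$. Meanwhile the concentration condition at Dirac masses must hold for characters of arbitrarily high index $\alpha$. I would try to show that, for some character $\chi\in H$ not controlled at stage $\alpha$, the sequence $\mu_n(\chi)$ cannot have modulus tending to $1$ along all ultrafilters while $g^*\chi$ is unimodular. This is where $|\chi|=1$ and the Haar-orthogonality of characters enter.

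I expect this diagonalization to be the hard step. The fallback is to replace $\ell^\infty/c_0$ by a better-suited quotient, such as $\prod M_{k}/\bigoplus M_k$ or a quotient of $C^*(F_\infty)$. The same obstruction would then be phrased through the failure of a countable family of states to approximate $\aleph_1$ many independent unitaries.
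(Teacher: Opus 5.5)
\textbf{There is a genuine gap, and it sits exactly at the step you flag as the main obstacle. Moreover, that step cannot be completed.}

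Your reduction to an abelian $H$ with $|H|=\aleph_1$ is correct and matches the paper: there is a conditional expectation $C^*(G)\to C^*(H)$, and the LP passes to ucp retracts.

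The trouble starts with $G=\bigoplus_{\aleph_1}\mathbb{Z}_2$. Every subgroup of cardinality $\aleph_1$ is again $\bigoplus_{\aleph_1}\mathbb{Z}_2$, so your method must handle $C^*(H)=C(2^{\aleph_1})$. This is the universal unital $C^*$-algebra generated by $\aleph_1$ commuting self-adjoint unitaries $u_\alpha$.

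Now take \emph{any} unital $*$-homomorphism $\pi:C^*(H)\to\ell^\infty/c_0$.
\begin{enumerate}
\item Choose self-adjoint lifts $y^\alpha\in\ell^\infty$ of $\pi(u_\alpha)$. Then $(y^\alpha_n)^2\to 1$.
\item Hence $z^\alpha:=(\mathrm{sgn}\,y^\alpha_n)_n$ is a self-adjoint unitary lift of $\pi(u_\alpha)$.
\item The $z^\alpha$ automatically commute in $\ell^\infty$. By universality there is a unital $*$-homomorphism $\rho:C^*(H)\to\ell^\infty$ with $\rho(u_\alpha)=z^\alpha$, and $\rho$ lifts $\pi$.
\end{enumerate}
In your language: for every continuous $g:\mathbb{N}^*\to 2^{\aleph_1}$, the Dirac measures $\mu_n=\delta_{x_n}$ with $x_n=(z^\alpha_n)_\alpha$ give a lift. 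So $g$ always extends to $\beta\mathbb{N}$, and $|\mu_n(\chi)|=1$ for every character $\chi$. No choice of $g$ and no diagonalization can produce the non-liftable map you want. The claim that $\mu_n(\chi)$ cannot have modulus tending to $1$ along all ultrafilters is false.

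The same argument, with $z_n=y_n/|y_n|$, handles $\bigoplus_{\aleph_1}\mathbb{Z}$. In fact, by the Ditor--Haydon theorem that the space $P(2^{\aleph_1})$ of probability measures is an absolute retract, even arbitrary ucp maps $C(2^{\aleph_1})\to\ell^\infty/c_0$ lift.

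So any witness must have a noncommutative target. Your fallback to $\prod M_k/\bigoplus M_k$ or to quotients of $C^*(F_\infty)$ comes with no obstruction attached.

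\textbf{What is missing is an invariant that a lift would violate.} The paper supplies one as follows.
\begin{enumerate}
\item It shows $C(2^{\aleph_1})$ is a ucp retract of $C^*(H)$.
\item It forces CH with a $\sigma$-closed forcing. This preserves $C(2^{\aleph_1})$ as a $C^*$-algebra, and its new spectrum stays totally disconnected, not extremally disconnected, and without $G_\delta$ points. By the universal-contraction argument of Lemma~\ref{forc}, the LP in the ground model still yields lifts of $*$-homomorphisms in the extension.
\item Under CH, Koszmider's theorem embeds $C(X_G)$ onto a masa of the Calkin algebra.
\item By Lemma~\ref{exp}, a ucp lift of a masa embedding yields a conditional expectation from a von Neumann algebra onto $C(X_G)$. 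This forces $C(X_G)$ to be monotone complete, i.e.\ $X_G$ extremally disconnected, which is a contradiction.
\end{enumerate}
Nothing in your proposal plays the role of this masa-plus-monotone-completeness obstruction.
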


To prove theorem \ref{disc}, we show that $C(2^{\aleph_1})$ is a ucp retract of $C^*(G)$ for every discrete group $G$ admitting an uncountable abelian subgroup. As such, we first prove that $C(2^\kappa)$ fails the LP for every uncountable $\kappa$. In particular, we prove the following theorem.

\begin{introthm}\label{LP}[Theorem \ref{LP2}]
    For every compact Hausdorff space $X$ which
    \begin{enumerate}
        \item is totally disconnected,
        \item is not extremally disconnected,
        \item has no $G_\delta$ points,
    \end{enumerate}
    $C(X)$ fails the lifting property. In particular, $C^*(\bigoplus_\mathfrak{\kappa}\mathbb{Z}_2)=C(2^\kappa)$ fails the lifting property for every uncountable $\kappa$.
\end{introthm}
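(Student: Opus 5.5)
The plan is to build an explicit ucp map $\phi: C(X)\to \ell^\infty/c_0 = C(\beta\mathbb{N}\setminus\mathbb{N})$ that has no ucp lift to $\ell^\infty$. The map will be of the form $\phi(g)=g\circ f$ for a suitable continuous map $f:\beta\mathbb{N}\setminus\mathbb{N}\to X$. A ucp lift $\theta: C(X)\to\ell^\infty$ is the same thing as a sequence of states, that is, Radon probability measures $\mu_n$ on $X$, with $\theta(g)=(\int g\,d\mu_n)_n$. The lifting condition says that for every free ultrafilter $p$,
\[
\lim_{n\to p}\mu_n = \delta_{f(p)}
\]
in the weak$^*$ topology. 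Since $X$ is totally disconnected, characteristic functions of clopen sets lie in $C(X)$. So for every clopen $W\subseteq X$, the bounded sequence $(\mu_n(W))_n$ must agree modulo $c_0$ with the indicator of $\mathbb{N}$ restricted to a set $A_W\subseteq\mathbb{N}$ with $A_W^*=f^{-1}(W)$. In other words, $\mu_n(W)\to 0$ off $A_W$ and $\mu_n(W)\to 1$ on $A_W$.

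To construct $f$, I use hypothesis (2). Since $X$ is not extremally disconnected, there are disjoint open sets $U,V$ and a point $x_0\in\overline U\cap\overline V$. I will choose $f$ so that, roughly, the part of $\mathbb{N}$ indexed by a set $N_1$ is sent into $\overline U$ and the part indexed by the complement $N_2$ is sent into $\overline V$. I want $x_0$ to be approached from both sides in the following sense. Every clopen neighbourhood $W$ of $x_0$ must satisfy: $A_W\cap N_1$ and $A_W\cap N_2$ are both infinite. Moreover, $f$ should encode the neighbourhood filter of $x_0$ through the Boolean algebra $P(\mathbb{N})/\mathrm{fin}$. Concretely, I would take $f$ to be the Stone dual of a Boolean homomorphism from the clopen algebra $\mathrm{Clop}(X)$ into $P(\mathbb{N})/\mathrm{fin}$. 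A homomorphism with any prescribed countable behaviour exists as long as I only need to control countably many clopens at a time, so the construction would proceed by a countable approximation.

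Given a lift, the main step is to extract a countable family of clopen sets whose intersection is $\{x_0\}$, contradicting hypothesis (3). The mechanism I intend to use is this. For each $n$, the measure $\mu_n$ is a single Radon measure, and the sets $A_W$ are determined mod finite by the $\mu_n$. From the sequence $(\mu_n)$ one can define, for each $k$, a clopen set $W_k\ni x_0$ with the property that the $\mu_n$ for $n\in N_1$ and the $\mu_n$ for $n\in N_2$ both put mass at least $1-1/k$ near $W_k$ only for $n$ in the appropriate sets. The aim is that any point $y\neq x_0$ in $\bigcap_k W_k$ would have a clopen neighbourhood separating it from $x_0$. That separation should contradict the requirement that the $\mu_n$ converge along ultrafilters to $\delta_{f(p)}$ with $f(p)=x_0$ from both the $U$ side and the $V$ side. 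The disjointness of $U$ and $V$ is what forces the measures to agree only at the single point $x_0$. This yields $\{x_0\}=\bigcap_k W_k$, so $x_0$ is a $G_\delta$ point, which is a contradiction.

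I expect the main obstacle to be the construction of $f$ together with the extraction step. $f$ has to be rigid enough that the countably many numbers $\mu_n(W)$ determine a countable clopen base at $x_0$. The natural first attempt is to make $f^{-1}(x_0)$ a single point $p_0$ that is the limit of $f$ along two complementary infinite sets. Then I would use the $\mu_n$ as the countable data in a diagonal argument.

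For the "in particular" part, $X=2^\kappa$ is totally disconnected. It has no $G_\delta$ points when $\kappa$ is uncountable, because a $G_\delta$ set depends on only countably many coordinates. It is not extremally disconnected: take $U,V$ open sets of the form $\bigcup_m$ of cylinders on disjoint coordinate patterns accumulating at the zero sequence. Finally, Pontryagin duality identifies $C^*(\bigoplus_\kappa\mathbb{Z}_2)$ with $C(2^\kappa)$.
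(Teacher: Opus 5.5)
There is a genuine gap, and it comes from the choice of target, not from the details. Your obstruction lives entirely in the commutative quotient $\ell^\infty/c_0$. For the paper's main example $X=2^\kappa$, no map of the form $\phi(g)=g\circ f$ can fail to lift, whatever $f$ is.

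The reason is that $C(2^\kappa)$ is the universal unital $C^*$-algebra generated by $\kappa$ commuting projections $p_\alpha$ (the coordinate clopen sets); equivalently, $\mathrm{Clop}(2^\kappa)$ is the free Boolean algebra on $\kappa$ generators.
\begin{itemize}
\item Each projection $\phi(p_\alpha)\in\ell^\infty/c_0$ lifts to a projection $1_{B_\alpha}\in\ell^\infty$.
\item These lifts commute automatically, since $\ell^\infty$ is abelian.
\item The universal property then gives a unital $*$-homomorphism $\rho:C(2^\kappa)\to\ell^\infty$ with $\pi\rho=\phi$. Concretely, $\mu_n=\delta_{x_n}$ with $x_n(\alpha)=1_{B_\alpha}(n)$.
\end{itemize}
So your countable-approximation construction of $f$ cannot succeed, and your extraction step is false. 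The space $2^{\aleph_1}$ is separable, so a non-$G_\delta$ point $x_0$ is the limit along free ultrafilters of points $x_n$ taken from either of two disjoint open sets accumulating at $x_0$. The Dirac measures $\delta_{x_n}$ give no countable clopen base at $x_0$. More is true: by the Ditor--Haydon theorem, the space of probability measures on $2^{\aleph_1}$ is an absolute retract. Hence even non-multiplicative ucp maps from $C(2^{\aleph_1})$ into quotients of commutative $C^*$-algebras lift, so this failure of the LP is invisible to abelian targets.

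The missing idea is a genuinely noncommutative target. The paper's argument runs as follows.
\begin{itemize}
\item It uses Koszmider's theorem: under CH, if $X$ is a totally disconnected compact space of weight at most $\mathfrak{c}$ with no $G_\delta$ points, then $C(X)$ embeds onto a masa of the Calkin algebra $\mathscr{Q}(H)$.
\item Suppose such an embedding $\iota$ had a ucp lift $V^*\rho(\cdot)V$. Then $T\mapsto\pi_H(V^*TV)$ maps the commutant $N=\rho[A]'$ into $\iota[A]'=\iota[A]$.
\item This gives a conditional expectation from the von Neumann algebra $N$ onto a copy of $C(X)$. That forces monotone completeness, i.e.\ extremal disconnectedness, contradicting (2).
\item To remove CH, the paper passes to a $\sigma$-closed collapse extension, which adds no new reals and in which CH holds and $\mathrm{dens}(C(X))\le\mathfrak{c}$.
\item It checks that (1)--(3) survive for the new spectrum.
\item It shows that the LP in the ground model still yields ucp lifts of unital $*$-homomorphisms in the extension.
\end{itemize}
In particular, hypothesis (3) is used only to obtain the masa embedding, not to extract a countable base from a lift.
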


\subsection{Acknowledgements}

This paper arose from a question during private correspondence between Rufus Willett and Ilijas Farah. I thank Farah for including me in this correspondence and for his guidance on this problem. Moreover, the key insight into its solution came from an interaction between Rufus Willett, Piotr Koszmider, and me at the Logic and $C^*$-algebras conference (IlijasFest) in Cetraro, Italy. As such, I thank all parties involved in facilitating this interaction. In particular, after attending Koszmider's talk on the contents of \cite{KOS}, Willett suggested to me that the results therein could be of use. This insight was precisely the key to finding the first example, so I am particularly grateful to him for it. ChatGPT was used substantially in the writing of this note, both in mathematical content and proofreading. I wrote and checked the final version of every proof appearing, and assume responsibility for their validity.

\section{Proof of Theorem \ref{LP}}

Our main target will be a nonliftable ucp map into $\mathscr{Q}(H):=\mathscr{B}(H)/\mathscr{K}(H)$. Throughout, we will often use quotients, and use subscripts to distinguish them. For example, $\pi_H:\mathscr{B}(H)\rightarrow\mathscr{Q}(H)$ and $\pi_J:B\rightarrow B/J$. This abuse will be clear from the context.

\begin{lem}\label{exp}
    Let $A$ be an abelian $C^*$-algebra. Let $\iota:A\hookrightarrow\mathscr{Q}(H)$ be an embedding onto a masa of $\mathscr{Q}(H)$. If $\iota$ lifts to a ucp map $\theta:A\rightarrow\mathscr{B}(H)$, then there exists a von Neumann algebra $N$ and a conditional expectation $E:N\rightarrow A$.
\end{lem}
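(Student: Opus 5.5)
The natural candidate for $N$ is the relative commutant of $\theta(A)$ modulo compacts, or more simply a von Neumann algebra built from the masa. My plan is to take $N=\mathscr{B}(H)$ and construct $E:\mathscr{B}(H)\to A$ as follows. Since $\iota(A)$ is a masa in $\mathscr{Q}(H)$, it is in particular its own commutant there. First, I find a conditional expectation (a ucp idempotent onto) from $\mathscr{Q}(H)$ onto $\iota(A)$. Then I precompose with $\pi_H$ and use the lift $\theta$ to get a map $\mathscr{B}(H)\to A$ that restricts to the identity on the image $\theta(A)$ in the appropriate sense.

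The key steps, in order, are these.

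First, I note that $\theta$ is a ucp lift of the $*$-homomorphism $\iota$. Hence $\pi_H\circ\theta=\iota$, and $\theta(A)$ lies in the multiplicative domain of $\pi_H$ modulo compacts. Concretely, $\theta(ab)-\theta(a)\theta(b)\in\mathscr{K}(H)$.

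Second, I want an idempotent ucp map $\Phi:\mathscr{B}(H)\to\mathscr{B}(H)$ whose image, pushed to $\mathscr{Q}(H)$, is $\iota(A)$. I would take a weak$^*$ limit point $\Phi$ of averages of $\mathrm{Ad}(u)$ over unitaries $u$ in $\theta(A)$, modulo compacts. Equivalently, I would use an invariant mean on the unitary group of the abelian (hence amenable) group $U(A)$, acting by $x\mapsto \theta(u)^*x\theta(u)$. Amenability of $U(A)$ as a discrete abelian group gives an invariant mean $m$. I then set $\Phi(x)=\int \theta(u)^*x\theta(u)\,dm(u)$ as a weak$^*$ integral. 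This lands in the von Neumann algebra $\mathscr{B}(H)$, which is why we need $N$ to be a von Neumann algebra (weak$^*$ compactness).

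Third, I check that $\pi_H(\Phi(x))$ commutes with $\iota(A)$. Invariance gives that $\Phi(x)$ commutes with each $\theta(v)$ up to compacts, using the first step. Then maximality of the masa gives $\pi_H(\Phi(x))\in\iota(A)$. I also check that $\pi_H(\Phi(\theta(a)))=\iota(a)$, since $\theta(u)^*\theta(a)\theta(u)-\theta(a)\in\mathscr{K}(H)$. I then define $E=\iota^{-1}\circ\pi_H\circ\Phi:\mathscr{B}(H)\to A$. This is ucp and satisfies $E\circ\theta=\mathrm{id}_A$.

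Finally, "conditional expectation onto $A$" means that $A$ sits inside $N$, and I would identify $A$ with $\theta(A)$ as an operator system. If a genuine subalgebra is needed, I would instead take $N=A^{**}$ or use the Choi–Effros product on the image of the idempotent $\theta\circ E$. The map $\theta\circ E$ is a ucp idempotent on $\mathscr{B}(H)$ with image $\theta(A)$, which is completely order isomorphic to $A$. By Tomiyama's theorem, $E$ (viewed with the Choi–Effros multiplication) is a conditional expectation.

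The main obstacle is the third step. Weak$^*$ limits do not interact well with the compacts: compact perturbations can accumulate under the averaging so that $\Phi(x)$ is no longer in the commutant modulo $\mathscr{K}(H)$. To handle this, I would first try to average in $\mathscr{Q}(H)^{**}$ or in $\ell^\infty$-type ultrapowers rather than in $\mathscr{B}(H)$. Alternatively, I would replace the weak$^*$ integral by a point-norm argument on the Calkin side, using that each commutator $[\theta(v),\theta(u)^*x\theta(u)]$ is compact uniformly bounded. If that fails, I would fall back to $N=\mathscr{B}(H)^{**}$ or $\mathscr{Q}(H)^{**}$ with normal extensions, so that the averaged map is well defined directly on the Calkin side.
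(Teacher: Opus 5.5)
There is a genuine gap in your third step, the one you flag yourself. Invariance of $m$ together with $\theta(u)\theta(v)-\theta(uv)\in\mathscr{K}(H)$ only gives
\[
\theta(v)^*\Phi(x)\theta(v)-\Phi(x)=\int K_u\,dm(u)\quad\text{(weak$^*$)},\qquad K_u\in\mathscr{K}(H),\ \sup_u\|K_u\|<\infty .
\]
A weak$^*$ average of uniformly bounded compacts need not be compact. The compacts are weak$^*$ dense in $\mathscr{B}(H)$ and $\pi_H$ is not weak$^*$ continuous; for instance, a Banach-limit mean of the finite-rank projections $P_n\uparrow 1$ is $1$. So you cannot conclude $\pi_H(\Phi(x))\in\iota(A)'$. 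For the same reason, $\pi_H(\Phi(\theta(a)))=\iota(a)$ is unjustified. None of your fallbacks repairs this. $\mathscr{Q}(H)$ has no weak$^*$ topology to average in, and invariant means do not produce norm limits. In $\mathscr{Q}(H)^{**}$ the relative commutant of $\iota(A)$ is much larger than $\iota(A)$, so the masa hypothesis no longer pins the average down.

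The missing idea, which is how the paper proceeds, is to eliminate the compact errors \emph{before} taking a commutant, by dilating. Write $\theta=V^*\rho(\cdot)V$ with $\rho:A\to\mathscr{B}(K)$ an honest representation, and set $P=VV^*$. Multiplicativity of $\iota$ gives $\pi_H(V^*\rho(a)^*(1-P)\rho(a)V)=0$. Hence $(1-P)\rho(a)P$ is compact and $[P,\rho(a)]\in\mathscr{K}(K)$. Now take $N=\rho(A)'$, a von Neumann algebra containing $\rho(A)$ because $A$ is abelian. Its elements commute with $\rho(A)$ exactly. So for $T\in N$, the verification of $\pi_H(V^*TV)\iota(a)=\iota(a)\pi_H(V^*TV)$ involves only the compact operator $[P,\rho(a)]$ sandwiched between bounded operators, and no averaging is needed. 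The masa property then gives $\pi_H(V^*TV)\in\iota(A)$. Define $E(T)=\rho(\iota^{-1}(\pi_H(V^*TV)))$; this is a norm-one projection of $N$ onto $\rho(A)\cong A$, where $\rho$ is faithful because $\theta$ is injective. By Tomiyama it is a conditional expectation. This also resolves the issue in your last paragraph: $\rho(A)$ is a genuine $C^*$-subalgebra of $N$, whereas $\theta(A)$ is not a subalgebra of $\mathscr{B}(H)$, so no Choi--Effros product is required.
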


\begin{proof}
    Let $(K,\rho,V)$ be a Stinespring dilation of $\theta$. That is, $\rho:A\rightarrow\mathscr{B}(K)$ is a representation and $V:H\rightarrow K$ an isometry such that $\theta(a)=V^*\rho(a)V$ for all $a\in A$. In particular, since $\theta$ lifts $\iota$, $\iota(a)=\pi_H(V^*\rho(a)V)$. Define $P:=VV^*$.
    Then
    \[0=\iota(a^*a)-\iota(a)^*\iota(a)=\pi_H(V^*\rho(a)^*(1-P)\rho(a)V)\]
    Thus $(1-P)\rho(a)V$ is compact, so $(1-P)\rho(a)P$ is as well.
    \begin{align*}
        [\pi_K(\rho(a)),\pi_K(P)] &= \pi_K(\rho(a)P-P\rho(a)) \\
        &= \pi_K((1-P)\rho(a)P+P\rho(a)P-P\rho(a)(1-P)-P\rho(a)P) \\
        &=\pi_K((1-P)\rho(a)P-P\rho(a)(1-P))=0
    \end{align*}
Let $N:=\rho[A]'$. Since $A$ is abelian, $\rho[A]\subseteq N$ and $N$ is a von Neumann algebra. Define $\phi: N\rightarrow\mathscr{Q}(H)$ by $\phi(T):=\pi_H(V^*TV)$. We claim $\phi[N]\subseteq\iota[A]'$. Fix $T\in N$, $a\in A$. Then

\begin{align*}
    \phi(T)\iota(a) &= \pi_H(V^*TV)\pi_H(V^*\rho(a)V) \\
    &= \pi_H(V^*TP\rho(a)V) \\
    &= \pi_H(V^*T\rho(a)PV) \\
    &= \pi_H(V^*T\rho(a)V) \\
    &= \pi_H(V^*\rho(a)TV) \\
    &= \pi_H(V^*P\rho(a)TV) \\
    &= \pi_H(V^*\rho(a)PTV) \\
    &= \pi_H(V^*\rho(a)V)\pi_H(V^*TV) =\iota(a)\phi(T).
\end{align*}
Since $\iota[A]$ is a masa, $\iota[A]'\subseteq\iota[A]$. Define $E:=\rho\circ\iota^{-1}\circ\phi: N\rightarrow \rho[A]$. Then $E$ is ucp and for $a\in A$,
\[E(\rho(a))=\rho(\iota^{-1}(\pi_H(V^*\rho(a)V))=\rho(\iota^{-1}(\iota(a))=\rho(a),\]
so $E$ is a projection onto $\rho[A]$ of norm 1, so $E$ is a conditional expectation \cite[II.6.10.2]{BLA}. Since $\theta$ lifts $\iota$, $\theta$ is injective. Hence $\rho$ must be faithful. Therefore, $A$ is isomorphic to $\rho[A]$ and the conclusion follows.
\end{proof}

The following is the main tool which we use to prove theorem \ref{LP}. It allows us to preserve enough of the LP through $\sigma$-closed forcings.

\begin{lem}\label{forc}
    Let $M$ be a transitive model of ZFC, $\mathbb{P}$ a $\sigma$-closed forcing, and $G$ a $\mathbb{P}$-generic filter. If a unital $C^*$-algebra $A$ has the LP in $M$, then every unital *-homomorphism $\phi: A\rightarrow B/J$ has a ucp lift $\theta: A \rightarrow B$ in $M[G]$.
\end{lem}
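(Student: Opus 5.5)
The plan is to push the problem through a universal free group $C^*$-algebra built in $M$, where the LP of $A$ is available, and then transfer to $M[G]$. Two facts do the transfer: $\sigma$-closed forcing adds no new countable sequences, and the norms involved are determined by countable data.

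\emph{Step 1: objects from $M$ survive in $M[G]$.} Since $\mathbb{P}$ is $\sigma$-closed, $M[G]$ contains no new $\omega$-sequences of elements of $M$. So every Cauchy sequence in $A$ lying in $M[G]$ already lies in $M$, and $A$ is still complete there. The algebraic operations and the norm are unchanged. I expect to check that $A$ is still a $C^*$-algebra in $M[G]$ by looking at countable subsets only.

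\emph{Step 2: the construction in $M$.} In $M$, let $U$ be the unitary group of $A$ and $F=F_U$ the free group on $U$. Let $q\colon C^*(F)^M\to A$ be the surjection sending each generator $g_u$ to $u$; it is surjective because a unital $C^*$-algebra is spanned by its unitaries. Since $A$ has the LP in $M$, the identity $A\to C^*(F)^M/\ker q$ has a ucp lift $\sigma\colon A\to C^*(F)^M$ in $M$. Positivity of the finite matrix amplifications $\sigma_n$ is witnessed by countable data, so I expect $\sigma$ to remain ucp in $M[G]$ once $C^*(F)^M$ is known to sit correctly inside $C^*(F)^{M[G]}$.

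\emph{Step 3: comparing the two completions (the main obstacle).} I must show that the canonical map $\mathbb{C}[F]\to C^*(F)^{M[G]}$ is isometric for the $M$-norm, so that it extends to an embedding $j\colon C^*(F)^M\hookrightarrow C^*(F)^{M[G]}$ with $q'\circ j=q$, where $q'$ is the quotient map in $M[G]$.
\begin{itemize}
\item Any $x\in\mathbb{C}[F]$ involves only finitely many generators. Its universal norm equals its norm in $C^*(F_\infty)$, computed on a countable subgroup, because free group algebras embed isometrically into those of larger free groups via conditional expectations.
\item The statement $\|x\|_{C^*(F_\infty)}=r$ concerns reals (codes for $x$ and $r$). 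It is projective of low complexity, for instance by quantifying over states, or over finite-dimensional approximations, which suffice since $C^*(F_\infty)$ is residually finite-dimensional.
\item Hence it is absolute between $M$ and $M[G]$ by Shoenfield absoluteness, or more simply because no new reals are added.
\end{itemize}
This step is where care is needed.

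\emph{Step 4: lifting in $M[G]$.} Given a unital $*$-homomorphism $\phi\colon A\to B/J$ in $M[G]$, let $\psi\colon C^*(F)^{M[G]}\to B/J$ be the $*$-homomorphism sending $g_u\mapsto\phi(u)$. Full free group $C^*$-algebras of arbitrary rank have the LP (Kirchberg; via Boca's theorem on free products, unital lifts of unitaries being available after dilation). So $\psi$ has a ucp lift $\tilde\psi$. Set
\[\theta:=\tilde\psi\circ j\circ\sigma .\]
Then $\pi_J\circ\theta=\psi\circ j\circ\sigma=\phi\circ q\circ\sigma=\phi$, since $\psi\circ j$ and $\phi\circ q$ agree on generators. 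So $\theta$ is the required ucp lift in $M[G]$.
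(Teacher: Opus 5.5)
\textbf{Gap in Step 4.} Your overall architecture matches the paper's. You fix a universal $C^*$-algebra in $M$ that maps onto $A$, use the LP in $M$ to get a ucp right inverse $\sigma$ of that quotient map, use the absence of new reals to see that the universal norm (hence the universal property) survives into $M[G]$, and then lift $\phi$ through the universal algebra. Steps 1--3 are fine and parallel the paper's treatment of $D_F$.

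The problem is the assertion in Step 4 that full free group $C^*$-algebras of arbitrary rank have the LP. This is not a theorem. Kirchberg's result covers countable rank. For uncountable $\kappa$ the LP of $C^*(F_\kappa)$ is open, and the paper's introduction recalls that Ozawa suspects it fails. Boca's theorem does not settle it either, since a general ucp map on a free product is not the Boca product of its restrictions. Note also that $U\supseteq\mathbb{T}1$ is always uncountable, so $F_U$ always has uncountable rank. You are therefore always invoking the open case, and the existence of $\tilde\psi$ as justified rests on an unproved, possibly false, statement.

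\textbf{The fix.} You need much less, because $\psi$ is a $*$-homomorphism, and that case is elementary. Lift each unitary $\phi(u)$ to a contraction $b_u\in B$. Since $\pi_J(1-b_ub_u^*)=0=\pi_J(1-b_u^*b_u)$, the Halmos unitary
\[W_u=\begin{pmatrix} b_u & (1-b_ub_u^*)^{1/2}\\ (1-b_u^*b_u)^{1/2} & -b_u^*\end{pmatrix}\in M_2(B)\]
satisfies $\pi_{M_2(J)}(W_u)=\mathrm{diag}(\phi(u),-\phi(u)^*)$. Universality gives a unital $*$-homomorphism $\rho\colon C^*(F)\to M_2(B)$ with $\rho(g_u)=W_u$. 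The range of $\pi_{M_2(J)}\circ\rho$ lies in the diagonal copy of $(B/J)\oplus(B/J)$, so the corner map $\tilde\psi(x):=\rho(x)_{11}$ is ucp and satisfies $\pi_J\tilde\psi=\psi$. Boca's theorem applied to ucp lifts of $\psi$ on the copies of $C(\mathbb{T})$ would also work here, precisely because $\psi$ is multiplicative. With this replacement your proof is complete.

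The paper avoids the issue by using the universal $C^*$-algebra $D$ on $S$-many contractions instead of a free group algebra. Contractions in $B/J$ lift to contractions in $B$, so the universal property of $D$ directly gives a $*$-homomorphic lift $\rho$ of $\phi q$. No unitary-lifting obstruction, and hence no dilation, is needed.
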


\begin{proof}
    Work in $M$. Choose a generating set $S\subseteq A_1$ for $A$, and let
    \[D:=C^*\langle x_s:s\in S\mid \|x_s\|\leq 1\rangle,\]
    that is, $D$ is the universal $C^*$-algebra generated by $S$-many contractions. Let $q: D\rightarrow A$ by $x_s\mapsto s$ for $s\in S$, which is a surjective unital *-homomorphism. Let $\bar{q}:D/\ker(q)\rightarrow A$ be the induced isomorphism. As $A$ has the LP, $\bar{q}^{-1}:A\rightarrow D/\ker(q)$ has a cpc lift $\sigma: A\rightarrow D$. As noted below \cite[C.1]{BROZ}, $\sigma$ can be taken to be ucp. After identifying $D/\ker(q)$ with $A$, we obtain $q\sigma=\bar{q}^{-1}=\mathrm{id}_A$.

    For $F\in[S]^{<\aleph_0}$, define $D_F:=C^*(x_s:s\in F)$. We claim that 
    \[D_F\cong C_F:=C^*\langle y_s:s\in F\mid \|y_s\|\leq 1\rangle\]
    in $M[G]$. Since $F$ is finite, $D_F$ is separable. Since $\mathbb{P}$ is $\sigma$-closed, it adds no new sequences in $D_F$. Moreover, the norm is unchanged, so every Cauchy sequence in $D_F$ in $M[G]$ is Cauchy in $M$, so converges in $M$, and so in $M[G]$. Therefore $D_F$ is complete. Since $M[G]$ adds no new reals, $\mathbb{C}$ is unchanged, so $D_F$ is still a $C^*$-algebra in $M[G]$. Suppose that, in $M[G]$, the norm on $D_F$ were strictly smaller than the universal norm. Then there would exist contractions $\bar{b}:=(b_s:s\in F)$ in a separable $C^*$-algebra $C:=C^*(b_s:s\in F)$ and a *-polynomial $p$ with coefficients in $\mathbb{Q}[i]$ and variables in $\bar{b}$ such that $\|p(\bar{b})\|>\|p(\bar{x})\|$, where $\bar{x}:=(x_s)_{s\in F}$, but this would be coded by a new real, as demonstrated in \cite[\S 2.3]{FTT}. Again, $\mathbb{P}$ adds no new reals, a contradiction. Therefore, $D_F$ is equipped with the universal norm, so our claim follows. As $D$ is the direct limit of $D_F$, it is the direct limit of $C_F$. As noted in \cite[2.3.11]{FA19},
    \[C^*\langle y_s:s\in S\mid \|y_s\|\leq 1\rangle\cong \lim_F C_F \cong D,\]
    so $D$ retains its universal property in $M[G]$.

    Now fix a unital *-homomorphism $\phi : A\rightarrow B/J$ in $M[G]$. Then $\phi q:D\rightarrow B/J$ is also a unital *-homomorphism. For each $s\in S$, fix a lift $b_s\in B$ of $\phi q(x_s)$ with $\|b_s\|\leq 1$. By the universal property of $D$, there is a unital *-homomorphism $\rho: D\rightarrow B$ such that $\pi_J \rho=\phi q$. Thus, $\theta=\rho\sigma: A\rightarrow B$ is ucp and 
    \[\pi_J\theta=\pi_J\rho\sigma=\phi q\sigma=\phi,\]
    so $\theta$ lifts $\phi$.
\end{proof}

Now we prove our prototypical example $C(2^\kappa)$ fails the LP.

\begin{thm}\label{LP2}
    For every compact Hausdorff space $X$ which
    \begin{enumerate}
        \item is totally disconnected,
        \item is not extremally disconnected,
        \item has no $G_\delta$ points,
    \end{enumerate}
    $C(X)$ fails the lifting property. In particular, $C^*(\bigoplus_\mathfrak{\kappa}\mathbb{Z}_2)=C(2^\kappa)$ fails the lifting property for every uncountable $\kappa$.
\end{thm}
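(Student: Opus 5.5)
The plan is to argue by contradiction, using a $\sigma$-closed forcing extension to reach a model where $C(X)$ sits as a masa of $\mathscr{Q}(H)$, and then combining Lemma \ref{forc} and Lemma \ref{exp}. Suppose $A:=C(X)$ has the LP in the ground model $M$. The $\sigma$-closed forcing will be $\mathbb{P}:=\mathrm{Coll}(\omega_1,|A|)$, which collapses $|A|$, and hence the weight of $X$, to $\aleph_1$ and forces CH. In $M[G]$ I will use Koszmider's result \cite{KOS}, which is the input suggested by Willett. I expect it to say that, under CH, for a totally disconnected compact space $K$ of weight at most $\mathfrak{c}$ with no $G_\delta$ points, $C(K)$ embeds onto a masa of $\mathscr{Q}(H)$. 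Pinning down the exact hypotheses of this cited theorem is the first thing I would check.

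The main technical step, and the one I expect to be the real obstacle, is transferring the hypotheses on $X$ into $M[G]$. I will work with the Boolean algebra $\mathcal{B}$ of projections of $A$ rather than with points of $X$. By (1), $A$ is generated by $\mathcal{B}$, and since $\mathbb{P}$ adds no new countable sequences, $A$ remains a complete unital abelian $C^*$-algebra, exactly as in the proof of Lemma \ref{forc}. Its spectrum in $M[G]$ is the Stone space $X'$ of $\mathcal{B}$, and $X'$ is totally disconnected.

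\textbf{No $G_\delta$ points in $X'$.} A $G_\delta$ point of $X'$ would be an ultrafilter on $\mathcal{B}$ generated by countably many elements. Such a countable family already lies in $M$, and it still generates an ultrafilter there, because $\mathcal{B}$ and its order are unchanged. That would give a $G_\delta$ point of $X$ in $M$, contradicting (3).

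\textbf{$X'$ is not extremally disconnected.} By (2), some family of projections in $M$ has no supremum in $\mathcal{B}$. This fact is absolute, since $\mathcal{B}$ is unchanged. It follows that $A$ is not monotone complete in $M[G]$.

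Now we are in $M[G]$. Koszmider's theorem provides a masa embedding $\iota:A\hookrightarrow\mathscr{Q}(H)$, where the Hilbert space $H$ may be taken from $M$ (for example $\ell^2$). By Lemma \ref{forc}, $\iota$ has a ucp lift $\theta:A\rightarrow\mathscr{B}(H)$. Lemma \ref{exp} then gives a von Neumann algebra $N$ and a conditional expectation $E:N\rightarrow A$ (identifying $A$ with $\rho[A]$).

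\textbf{Conditional expectation forces monotone completeness.} Let $(a_i)$ be a bounded increasing net in $A$, and let $s:=\sup_i a_i$ be its supremum in $N$. Positivity of $E$ gives $E(s)\geq E(a_i)=a_i$ for every $i$. If $b\in A$ is any upper bound of the net, then $s\leq b$, so $E(s)\leq E(b)=b$. Hence $E(s)$ is the least upper bound of $(a_i)$ in $A$. So $A$ is monotone complete, meaning $X'$ is extremally disconnected. This contradicts the previous paragraph, so $C(X)$ fails the LP in $M$.

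There is a metamathematical caveat: $M$ must be a transitive model, so I would run the argument inside a countable transitive model of enough of ZFC containing the relevant objects, or use Boolean-valued models. I would treat this as routine.

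\textbf{The case $C(2^\kappa)$ with $\kappa$ uncountable.} We need to check conditions (1)--(3) for $2^\kappa$. Condition (1) is clear. For (3), every $G_\delta$ set in $2^\kappa$ constrains only countably many coordinates, so it cannot be a singleton. For (2), take an uncountable family of pairwise disjoint clopen sets in $2^\kappa$, each depending on finitely many coordinates. Split it into two uncountable subfamilies. The unions of the two subfamilies are disjoint open sets, and their closures meet, because every nonempty $G_\delta$ set meets both closures. So those closures are not open, which exhibits the failure of extremal disconnectedness. Finally, $C^*(\bigoplus_\kappa\mathbb{Z}_2)=C(\widehat{\bigoplus_\kappa\mathbb{Z}_2})=C(2^\kappa)$ by Pontryagin duality.
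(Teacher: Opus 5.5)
Your main argument is the paper's. It uses a $\sigma$-closed collapse to get CH and weight at most $\mathfrak{c}$, then Koszmider's masa embedding, then Lemma~\ref{forc} and Lemma~\ref{exp}, and finally the conditional-expectation argument that forces monotone completeness. You transfer (2) and (3) through the Boolean algebra $\mathcal{B}$ of projections, using an incomplete $\mathcal{B}$ and countably generated ultrafilters. The paper instead uses a net with no supremum and the $\overline{aA}$ criterion. Your version is an equivalent, correct variant.

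\textbf{The gap is in the ``in particular'' part, in your verification of (2) for $2^\kappa$.} There is no uncountable family of pairwise disjoint nonempty clopen subsets of $2^\kappa$. The space is ccc, because Haar measure on $\mathbb{Z}_2^\kappa$ is strictly positive on nonempty open sets. This is exactly the faithful trace the paper points to in its introduction; alternatively, apply the $\Delta$-system lemma to basic clopen sets. Your justification that the closures meet is also false. Take $C$ in the first subfamily and a nonempty clopen $D\subseteq C$. Then $D$ is a nonempty $G_\delta$ set disjoint from the closure of the second union, since $C$ is open and misses that union. So switching to a countable family does not fix the argument as written; the reason the closures meet must change too.

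A correct replacement is to set $C_n:=\{x\in 2^\kappa: x_0=\dots=x_{n-1}=0,\ x_n=1\}$ for $n<\omega$. The unions over even $n$ and over odd $n$ are disjoint open sets. Both closures contain the constant-zero point: given a basic neighbourhood fixing the coordinates in a finite $F$ to be $0$, pick $n\notin F$ of the right parity, and the point whose only nonzero coordinate is $x_n=1$ lies in both the neighbourhood and $C_n$. This is impossible in an extremally disconnected space, where disjoint open sets have disjoint (clopen) closures. Equivalently, $2^\kappa$ has nontrivial convergent sequences and extremally disconnected compacta have none. The paper leaves the check of (1)--(3) for $2^\kappa$ implicit, so this is the only step of your write-up that needs repair.
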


\begin{proof}
    Work in a ground model $M$ and let $X$ be as in the assumption. Set $A:=C(X)^M$ and suppose, towards contradiction, that $A$ satisfies the LP in $M$. Let $\kappa:=\max\{\mathfrak{c},\mathrm{dens}(A)\}$, where $\mathrm{dens}(A)$ denotes the density character of $A$, and define the forcing
    \[\mathbb{P}:=\{f: s\rightarrow\kappa: s\in[\aleph_1]^{<\aleph_1}\}\]
    ordered under reverse graph inclusion. This is a standard $\sigma$-closed forcing. A $\mathbb{P}$-generic filter $G$ yields a function $g:=\bigcup G$, which is a surjection from $\aleph_1$ onto $\kappa$. Therefore, $\mathrm{dens}^{M[G]}(A)\leq\mathfrak{c}^{M[G]}=\aleph_1$. That is, in $M[G]$, the continuum hypothesis holds and $\mathrm{dens}(A)\leq\mathfrak{c}$. 
    
    We claim that $A$ remains a $C^*$-algebra in $M[G]$ and that its new spectrum,
    \[X_G:=\mathrm{Spec}^{M[G]}(A),\]
    is a compact Hausdorff space which
    \begin{enumerate}
        \item is totally disconnected,
        \item is not extremally disconnected,
        \item has no $G_\delta$ points,
        \item has weight no greater than $\mathfrak{c}^{M[G]}$.
    \end{enumerate}

    To see that $A$ remains a (unital abelian) $C^*$-algebra in $M[G]$, it suffices to show that $A$ is complete and $\mathbb{C}$ is unchanged, as unital abelian $C^*$-algebras are axiomatizable \cite[2.5.1]{FA21}. Since $\mathbb{P}$ is $\sigma$-closed, it adds no new sequences from the ground model. Because the norm is unchanged, every Cauchy sequence in $A$ belonging to $M[G]$ already belongs to $M$, so converges in $M$, hence in $M[G]$. Moreover, since $M[G]$ adds no new reals, $\mathbb{C}$ is unchanged. The fact that $X_G$ is compact Hausdorff follows by Gelfand duality, as $A\cong C(X_G)^{M[G]}$. We identify these for the remainder. For (1), $X$ is totally disconnected, hence $A$ is generated by projections in $M$, i.e. $A=\overline{\mathrm{span}}(\mathrm{Proj}(A))$. Since the underlying metric space $A$ is unchanged, this density remains true, so $X_G$ is totally disconnected. For (2), since $X$ is not extremally disconnected, $A$ is monotone incomplete in $M$. Fix an increasing net $(a_\lambda)$ in $A_\text{sa}$ with no supremum. This net still has no supremum in $A$ in $M[G]$, so $A$ is monotone incomplete, and $X_G$ is not extremally disconnected. For (3), a point $y$ in a compact Hausdorff space $Y$ is $G_\delta$ if and only if there is $f\in C(Y)_+$ such that $f^{-1}(0)=\{y\}$, equivalently,
    \[C(Y)/\overline{fC(Y)}\cong \mathbb{C}.\]
    Therefore, $y\in Y$ is $G_\delta$ iff for some $f\in C(Y)_+$, $C(Y)/\overline{fC(Y)}\cong \mathbb{C}$. It follows that $X_G$ has a $G_\delta$ point if and only if there is $a\in A_+$ such that $A/\overline{aA}\cong\mathbb{C}$ in $M[G]$. For each $a\in A$, $\overline{aA}$ is unchanged, since $b\in \overline{aA}$ iff there exists $b_n\in A$ with $\|ab_n-b\|\rightarrow0$ and $\mathbb{P}$ adds no new sequences of elements of $A$. Since $X$ has no $G_\delta$ points in $M$, for $a\in A_+$, we have $A/\overline{aA}\not\cong\mathbb{C}$ in $M[G]$. Therefore, $X_G$ has no $G_\delta$ points. Claim (4) follows immediately, since 
    \[w(X_G)=\mathrm{dens}^{M[G]}(C(X_G)^{M[G]})=\mathrm{dens}^{M[G]}(A)\leq\mathfrak{c}^{M[G]}=\aleph_1.\]
    
    By Koszmider \cite[1.1(B)]{KOS}, under the continuum hypothesis, properties $(1)$, $(3)$, and $(4)$ on $X_G$ guarantee an embedding $\iota:A\hookrightarrow\mathscr{Q}(H)$ onto a masa. Since $A$ has the LP in $M$, by lemma \ref{forc}, $\iota$ lifts to ucp $\theta: A\rightarrow\mathscr{B}(H)$ in $M[G]$. By lemma \ref{exp}, there exists a von Neumann algebra $N$ and a conditional expectation $E$ from $N$ to $A$. 
    
    We claim that $X_G$ is extremally disconnected, contradicting $(2)$. It suffices to show that $A$ is monotone complete. Let $(a_\lambda)$ be a bounded, increasing, self-adjoint net in $A$. Then since $A$ is a $C^*$-subalgebra of $N$, and $N$ is von Neumann, $a:=\sup_\lambda a_\lambda\in N$. As $E$ is positive, $E(a)\geq E(a_\lambda)$ for every $\lambda$. If $b\in A$ is an upper bound, then it is an upper bound in $N$, so $a\leq b$, so $E(a)\leq E(b)$. Thus $E(a)\in A$ is the least upper bound of $E(a_\lambda)=a_\lambda$ in $A$. As $(a_\lambda)$ was arbitrary, $A=C(X_G)^{M[G]}$ is monotone complete, a contradiction to (2). Therefore, $\iota$ has no ucp lift in $M[G]$, so $C(X)$ fails the LP in $M$.

\end{proof}

\section{Proof of Theorem \ref{disc}}

We now use the failure of the LP for $C(2^{\aleph_1})$ to prove our main result. To do this, we need the following definition and a few lemmas.

\begin{defn}
    Let $A,B$ be unital $C^*$-algebras. We call $A$ a \textit{ucp retract} of $B$ if there exist ucp maps $\phi_1: A\rightarrow B$, $\phi_2: B\rightarrow A$ such that $\phi_2\phi_1=\mathrm{id}_A$. We note that ucp retracts are transitive.
\end{defn}

\begin{lem}\label{free}
    Every uncountable discrete abelian group $G$ contains a subgroup isomorphic to
    \[\bigoplus_{\aleph_1}\mathbb{Z}\text{ or }\bigoplus_{\aleph_1}\mathbb{Z}_p\]
    for some prime $p$.
\end{lem}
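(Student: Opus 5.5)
Let $T$ be the torsion subgroup of $G$. I will split into two cases according to whether $T$ is uncountable or $G/T$ is uncountable; since $|G|=\max(|T|,|G/T|)$ when $G$ is infinite, at least one of these must hold.

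\emph{Case 1: $G/T$ is uncountable.} The quotient $G/T$ is torsion-free. Choose a maximal $\mathbb{Z}$-linearly independent subset $\{\bar x_i\}_{i\in I}$ of $G/T$. Then $G/T$ embeds in $\mathbb{Q}\otimes(G/T)\cong\bigoplus_I\mathbb{Q}$. If $I$ were countable, this would force $G/T$ to be countable, so $|I|\geq\aleph_1$. Lift $\aleph_1$ many of the $\bar x_i$ to elements $x_i\in G$. Any relation $\sum n_ix_i=0$ projects to a relation in $G/T$, so all $n_i=0$. Hence the subgroup generated by the $x_i$ is isomorphic to $\bigoplus_{\aleph_1}\mathbb{Z}$.

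\emph{Case 2: $T$ is uncountable.} The group $T$ is the direct sum of its primary components $T_p$, and there are only countably many primes, so some $T_p$ is uncountable. Consider the socle $S=T_p[p]=\{x\in T_p: px=0\}$. This is a vector space over $\mathbb{Z}_p$, so $S\cong\bigoplus_{\dim S}\mathbb{Z}_p$. It therefore suffices to show that $S$ is uncountable; then any $\aleph_1$ basis vectors span the required subgroup.

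The main obstacle is this last step: an uncountable $p$-group has uncountable socle. Suppose instead that $S$ is countable. Consider the maps
\[
\mu_p\colon T_p[p^{n+1}]\to T_p[p^n],\qquad x\mapsto px .
\]
Its kernel is $S$, so
\[
|T_p[p^{n+1}]|\leq |S|\cdot|T_p[p^n]| .
\]
By induction each $T_p[p^n]$ is countable. Since $T_p=\bigcup_n T_p[p^n]$, the group $T_p$ would then be countable, a contradiction.

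Combining the two cases, $G$ contains a copy of $\bigoplus_{\aleph_1}\mathbb{Z}$ or of $\bigoplus_{\aleph_1}\mathbb{Z}_p$ for some prime $p$. No earlier results of the paper are needed; the argument is purely algebraic and uses the axiom of choice, via a maximal independent set and a vector space basis.
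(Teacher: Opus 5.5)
Your proposal is correct and follows essentially the same route as the paper: you split on whether the torsion subgroup $T$ or the quotient $G/T$ is uncountable, use the multiplication-by-$p$ maps to show that an uncountable $p$-primary component has uncountable socle $T_p[p]$, and lift an independent family from the torsion-free quotient. The only addition is that you justify the paper's bare assertion that an uncountable torsion-free group has uncountable rank, via the embedding $G/T\hookrightarrow\mathbb{Q}\otimes(G/T)\cong\bigoplus_I\mathbb{Q}$.
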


\begin{proof}
    Let $T$ be the torsion subgroup of $G$ and 
    \[T_p:=\{g\in T:p^ng=0\text{ for some }n\geq 1\}\]
    for prime $p$. Because $T=\bigoplus_p T_p$, if $T$ is uncountable, then $T_p$ is uncountable for some $p$. For $n\geq 1$, define 
    \[T_{p,n}:=\{g\in T_p:p^ng=0\}.\]
    Multiplication by $p$ gives a map $T_{p,n+1}\rightarrow T_{p,n}$ with kernel $T_{p,1}$. Consequently, if $T_{p,1}$ is countable, so is $T_p=\bigcup_{n=1}^\infty T_{p,n}$, a contradiction. Therefore $T_{p,1}$ is an uncountable $\mathbb{F}_p$-vector space, so contains a copy of $\bigoplus_{\aleph_1}\mathbb{Z}_p$.

    If $T$ is countable, then $G/T$ is an uncountable torsion-free abelian group, so has uncountable torsion-free rank. Thus $G/T$ contains a copy of $\bigoplus_{\aleph_1}\mathbb{Z}$. By choosing $\mathbb{Z}$-linearly independent lifts in $G$, $G$ also contains $\bigoplus_{\aleph_1}\mathbb{Z}$.
\end{proof}

\begin{lem}\label{Zto2}
    For every infinite cardinal $\kappa$, $C(2^\kappa)$ is a ucp retract of $C^*(H)$ for $H$ isomorphic to
    \[\bigoplus_{\kappa}\mathbb{Z}\text{ or }\bigoplus_{\kappa}\mathbb{Z}_p\]
    for some prime $p$.
\end{lem}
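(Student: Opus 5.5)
Everything will be phrased through Pontryagin duality. For $H$ discrete abelian we have $C^*(H)\cong C(\widehat H)$. For $H=\bigoplus_\kappa\mathbb{Z}$ the dual is $\widehat H=\mathbb{T}^\kappa$, and for $H=\bigoplus_\kappa\mathbb{Z}_p$ it is $\widehat H=\mathbb{Z}_p^\kappa$ (writing $\mathbb{Z}_p$ for the cyclic group of order $p$, as in the paper). The strategy is to build unital *-homomorphisms $\phi_1:C(2^\kappa)\to C(\widehat H)$ and a ucp map $\phi_2:C(\widehat H)\to C(2^\kappa)$ with $\phi_2\phi_1=\mathrm{id}$. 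Dually, this means a continuous surjection $s:\widehat H\to 2^\kappa$ together with a weak$^*$-continuous assignment $x\mapsto\mu_x$ of probability measures on $\widehat H$ with $s_*\mu_x=\delta_x$. The cleanest way to get this is a product construction: handle one coordinate, then take a product over $\kappa$.

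\emph{One coordinate.} Let $Y$ be $\mathbb{T}$ or $\mathbb{Z}_p$ (the dual of a single factor).
\begin{itemize}
\item If $Y=\mathbb{Z}_p$, take any surjection $s_0:\mathbb{Z}_p\to\{0,1\}$ and the section $r_0:\{0,1\}\to\mathbb{Z}_p$, $r_0(i)=i$. Then $\phi_1=s_0^*$ and $\phi_2=r_0^*$ are *-homomorphisms with $r_0^*s_0^*=(s_0r_0)^*=\mathrm{id}$.
\item If $Y=\mathbb{T}$, no continuous surjection $\mathbb{T}\to\{0,1\}$ exists since $\mathbb{T}$ is connected. Instead use a ucp map in one direction. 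Set $\phi_2:C(\mathbb{T})\to C(\{0,1\})$, $f\mapsto(f(1),f(-1))$, which is a *-homomorphism. Set $\phi_1:C(\{0,1\})\to C(\mathbb{T})$, $\phi_1(a,b)=a h+b(1-h)$, where $h:\mathbb{T}\to[0,1]$ is continuous with $h(1)=1$ and $h(-1)=0$. This $\phi_1$ is positive and unital, hence ucp since the domain is commutative, and $\phi_2\phi_1=\mathrm{id}$.
\end{itemize}

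\emph{Products over $\kappa$.} For each coordinate $\alpha<\kappa$ we have ucp maps $\phi_1^\alpha:C(\{0,1\})\to C(Y)$ and $\phi_2^\alpha:C(Y)\to C(\{0,1\})$. We need their infinite tensor products on $C(2^\kappa)=\bigotimes_\kappa C(\{0,1\})$ and $C(Y^\kappa)=\bigotimes_\kappa C(Y)$. This is where I expect the main work, since infinite tensor products of ucp maps must be shown to be well-defined.

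My first choice is to avoid tensor products by writing the maps directly.
\begin{itemize}
\item $\phi_2$ is composition with the continuous map $2^\kappa\to Y^\kappa$ obtained by applying the point choice coordinatewise ($0\mapsto 1$, $1\mapsto-1$ in the torus case). Since it is composition with a continuous map, it is a unital *-homomorphism.
\item $\phi_1$ is given by $\phi_1(f)(y)=\int f\,d\mu_y$, where $\mu_y=\bigotimes_\alpha \nu_{y_\alpha}$ and $\nu_t=h(t)\delta_0+(1-h(t))\delta_1$ in the torus case ($\nu_t=\delta_{s_0(t)}$ in the finite case).
\end{itemize}
$\phi_1$ is clearly positive and unital. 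For continuity of $\phi_1(f)$, the map $y\mapsto\mu_y$ is weak$^*$-continuous: on cylinder functions depending on finitely many coordinates it is a finite product of continuous functions, and such functions are dense in $C(2^\kappa)$ by Stone--Weierstrass.

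\emph{Verifying the retraction.} Evaluate $\phi_2\phi_1$ at $x\in2^\kappa$. The image point $y$ has coordinates $\pm1$, and $\nu_{\pm1}$ is the point mass at the corresponding coordinate of $x$. Hence $\mu_y=\delta_x$ and $\phi_2\phi_1(f)(x)=f(x)$.

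The identifications $C^*(H)\cong C(\widehat H)$ with these duals are standard. So the substantive steps are the one-dimensional construction and the weak$^*$-continuity of the product measures, and the latter is the step to check carefully.
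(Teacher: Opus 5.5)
Your proposal is correct and is essentially the paper's proof. Your $\phi_2$ is restriction along the coordinatewise embedding $2^\kappa\hookrightarrow\widehat H$. Your $\phi_1(f)(y)=\int f\,d\mu_y$ is exactly the paper's cylinder-function formula $\sum_{s\in 2^F}g(s)\prod_{\alpha\in F}h_{s(\alpha)}(y_\alpha)$, with the same Stone--Weierstrass density argument. The only flaw is in your opening strategy paragraph, which has the roles reversed: it asks for a unital *-homomorphism $\phi_1$, dually a continuous surjection $\widehat H\to 2^\kappa$, which cannot exist for $\mathbb{T}^\kappa$, as you yourself note. This is only an expository slip, since the construction you actually carry out uses the correct orientation.
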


\begin{proof}
    Let $K=\mathbb{T}$ if $H=\bigoplus_\kappa\mathbb{Z}$ and $K=\mathbb{Z}_p$ if $H=\bigoplus_{\kappa}\mathbb{Z}_p$. By Pontryagin duality, it suffices to show that $C(2^\kappa)$ is a ucp retract of $C(K^\kappa)$. Choose distinct $z_0,z_1\in K$. By the normality of $K$, there exists continuous $h_0:K\rightarrow[0,1]$ such that $h_0(z_0)=1$ and $h_0(z_1)=0$. Let $h_1:=1-h_0$, so $h_i(z_j)=\delta_{ij}$. Identify $2^\kappa$ with $\{z_0,z_1\}^\kappa\subseteq K^\kappa$. Let $\phi_2:C(K^\kappa)\rightarrow C(2^\kappa)$ be the restriction map, which is a unital *-homomorphism.

    Let $A$ be the *-subalgebra of $C(2^\kappa)$ consisting of functions depending only on finitely many coordinates in $\kappa$. Since this is a *-subalgebra which separates points in $2^\kappa$, by Stone-Weierstrass, $A$ is dense in $C(2^\kappa)$.

    For $F\in[\kappa]^{<\aleph_0}$, let $\pi_F:2^\kappa\rightarrow 2^F$ be the canonical projection and define
    \[A_F:=\{g\circ\pi_F:g\in C(2^F)\},\quad A:=\bigcup_{F\in [\kappa]^{<\aleph_0}}A_F.\]
    For $f=g\circ\pi_F$, define 
    \[\phi_0(f)(x):=\sum_{s\in 2^F}g(s)\prod_{\alpha\in F}h_{s(\alpha)}(x_\alpha)\]
    for $x=(x_\alpha)\in K^\kappa$. If $F\subseteq F'$, $f=g\circ \pi_F=g'\circ\pi_{F'}$ and $s'\in 2^{F'}$. Then $g'(s')=g(s'|_F)$. Consequently, since $f$ depends only on coordinates in $F$, the sum adds a term for every extension of $s\in 2^F$, so
    \begin{align*}
        \sum_{s'\in 2^{F'}}g'(s')\prod_{a\in F'}h_{s'(\alpha)}(x_\alpha) &= \sum_{s\in 2^F}g(s)\prod_{\alpha\in F}h_{s(\alpha)}(x_\alpha) \sum_{t\in 2^{F'\setminus F}}\prod_{\beta\in F'\setminus F}h_{t(\beta)}(x_\beta) \\
        &= \sum_{s\in 2^F}g(s)\prod_{\alpha\in F}h_{s(\alpha)}(x_\alpha)\prod_{\beta\in F'\setminus F}(h_0(x_\beta)+h_1(x_\beta) \\
        &= \sum_{s\in 2^F}g(s)\prod_{\alpha\in F}h_{s(\alpha)}(x_\alpha).
    \end{align*}
    Thus $\phi_0$ is independent of $F$ so long as $f\in A_F$, so $\phi_0$ is well-defined on $A$. This map is positive, unital, and contractive, since the coefficients $\prod_{\alpha\in F}h_{s(\alpha)}(x_\alpha)$ are nonnegative and sum to $1$. Since $A$ is dense in $C(2^\kappa)$, extend $\phi_0$ to $\phi_1:C(2^\kappa)\rightarrow C(K^\kappa)$. Then $\phi_1$ is unital and positive between abelian $C^*$-algebras, so is ucp.

    For $y\in 2^\kappa$, $f\in A_F$, evaluate $\phi_0(f)$ at $z:=(z_{y(\alpha)})_{\alpha<\kappa}$. This yields
    \begin{align*}
        \phi_0(f)(z) &= \sum_{s\in 2^F}g(s)\prod_{\alpha\in F} h_{s(\alpha)}(z_{y(\alpha)}) \\
        &= \sum_{s\in 2^F}g(s)\prod_{\alpha\in F}\delta_{s(\alpha)y(\alpha)} \\
        &= g\circ\pi_F(y)=f(y).
    \end{align*}
    Therefore $\phi_2\phi_0(f)=f$ for every $f\in A$.
    As $A$ is dense in $C(2^\kappa)$, $\phi_2\phi_1=\mathrm{id}_{C(2^\kappa)}$.
\end{proof}

\begin{comment}

\begin{lem}
    Let $H\leq G$ be discrete groups. Then $C^*(H)$ is a ucp retract of $C^*(G)$.
\end{lem}

\begin{proof}
    Let $\phi_1: C^*(H)\rightarrow C^*(G)$ be the canonical inclusion map. Let $\lambda_{G/H}$ be the quasi-regular representation of $G$ on $\ell^2(G/H)$. Then $\varphi(g)=\langle\lambda_{G/H}(g)\delta_{H},\delta_H\rangle$ is a state and its multiplier $M_\varphi : C^*(G)\rightarrow C^*(G)$ is ucp. Then $M_\varphi(g)=1_{H}(g)g$, so evidently $M_\varphi[C^*(G)]\subseteq C^*(H)$. By identifying $C^*(H)$ with its image under $\phi_1$, we obtain ucp $\phi_2: C^*(G)\rightarrow C^*(H)$ and for every $g\in H$, $\phi_2\phi_1(g)=g$, so $\phi_2\phi_1=\mathrm{id}_{C^*(H)}$.
\end{proof}

\end{comment}

\begin{thm}\label{disc2}
    For every discrete group $G$ admitting an uncountable abelian subgroup, $C^*(G)$ fails the lifting property. In particular, $C^*(G)$ fails the lifting property for every uncountable discrete abelian group $G$.
\end{thm}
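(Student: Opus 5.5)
The plan is to show that $C(2^{\aleph_1})$ is a ucp retract of $C^*(G)$. The LP passes to ucp retracts, and $C(2^{\aleph_1})$ fails the LP by Theorem \ref{LP2}, so this suffices.

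First, the retract property. Suppose $\phi_1:A\to B$ and $\phi_2:B\to A$ are ucp with $\phi_2\phi_1=\mathrm{id}_A$, and $B$ has the LP. Given a ucp map $\psi:A\to C/J$, the map $\psi\phi_2:B\to C/J$ is ucp. Take a ucp lift $\theta:B\to C$; we may take it ucp by the remark below \cite[C.1]{BROZ}. Then $\theta\phi_1$ is a ucp lift of $\psi\phi_2\phi_1=\psi$. Hence $A$ has the LP. The case of a cpc rather than ucp $\psi$ reduces to the ucp one by the same remark, or by unitization.

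Second, the chain of retracts. Let $L\le G$ be an uncountable abelian subgroup. By Lemma \ref{free}, $L$ contains a subgroup $H\cong\bigoplus_{\aleph_1}\mathbb{Z}$ or $H\cong\bigoplus_{\aleph_1}\mathbb{Z}_p$. By Lemma \ref{Zto2}, $C(2^{\aleph_1})$ is a ucp retract of $C^*(H)$. Since ucp retracts are transitive, it remains to show that $C^*(H)$ is a ucp retract of $C^*(G)$ for any subgroup $H\le G$.

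Third, the subgroup retract. For $\phi_1$, take the canonical map $C^*(H)\to C^*(G)$. This map is injective, and in fact isometric, because every unitary representation of $H$ is a subrepresentation of its induced representation to $G$. For $\phi_2$, take the positive definite function $\varphi=1_H$ on $G$. It equals $\langle\lambda_{G/H}(g)\delta_H,\delta_H\rangle$ for the quasi-regular representation, so it is positive definite. Its Schur multiplier $M_\varphi:C^*(G)\to C^*(G)$, $u_g\mapsto\varphi(g)u_g$, is ucp. One realizes it as $x\mapsto V^*(\pi\otimes\lambda_{G/H})(x)V$ with $V\xi=\xi\otimes\delta_H$, using Fell absorption for the full algebra, i.e.\ the diagonal map $C^*(G)\to C^*(G)\otimes_{\max}C^*(G)$. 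The image of $M_\varphi$ lies in the closed span of $\{u_h:h\in H\}$, which is $\phi_1(C^*(H))$. Composing with $\phi_1^{-1}$ gives a ucp map $\phi_2$ with $\phi_2\phi_1(u_h)=u_h$, so $\phi_2\phi_1=\mathrm{id}$ by density.

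The main obstacle I expect is this third step. It requires injectivity of $C^*(H)\to C^*(G)$ for full group algebras, which is the standard induction argument. It also requires that $M_\varphi$ is a well-defined ucp map on the full algebra, which is standard for positive definite functions but should be justified by the dilation above. The case of uncountable abelian $G$ is the special case $L=G$.
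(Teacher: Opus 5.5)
Your proposal is correct and follows the paper's proof. Both arguments build the same chain of ucp retracts $C(2^{\aleph_1}) \to C^*(H) \to C^*(G)$ via Lemmas \ref{free} and \ref{Zto2}, and both pass the LP down a ucp retract in the same way. The only difference is that you construct the retraction $C^*(G)\to C^*(H)$ by hand, as the multiplier of the positive definite function $1_H$, whereas the paper cites \cite[2.5.8]{BROZ} for the embedding and \cite[2.5.11]{BROZ} for the conditional expectation, which is that same map $u_g\mapsto 1_H(g)u_g$.
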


\begin{proof}
    Let $G$ be a discrete group admitting an uncountable abelian subgroup. By lemma \ref{free}, $G$ admits a subgroup $H$ isomorphic to $\bigoplus_{\aleph_1}\mathbb{Z}$ or $\bigoplus_{\aleph_1}\mathbb{Z}_p$ for some prime $p$. In either case, by lemma \ref{Zto2}, $C(2^{\aleph_1})$ is a ucp retract of $C^*(H)$. It follows by \cite[2.5.8]{BROZ} that there is a canonical embedding $\iota:C^*(H)\hookrightarrow C^*(G)$. By \cite[2.5.11]{BROZ}, there is a conditional expectation $E:C^*(G)\rightarrow C^*(H)$ such that $E\iota=\mathrm{id}_{C^*(H)}$. Therefore, $C^*(H)$ is a ucp retract of $C^*(G)$, so by transitivity, $C(2^{\aleph_1})$ is a ucp retract of $C^*(G)$.
    
    Suppose, towards contradiction, that $C^*(G)$ satisfies the LP. By the above, there exist ucp maps $\phi_1:C(2^{\aleph_1})\rightarrow C^*(G)$, $\phi_2: C^*(G)\rightarrow C(2^{\aleph_1})$ such that $\phi_2\phi_1=\mathrm{id}_{C(2^{\aleph_1})}$. Fix an arbitrary ucp map $\phi:C(2^{\aleph_1})\rightarrow B/J$. Then $\phi\phi_2:C^*(G)\rightarrow B/J$ is ucp, so admits a ucp lift $\theta: C^*(G)\rightarrow B$. Then $\theta\phi_1:C(2^{\aleph_1})\rightarrow B$ is ucp and $\pi_J \theta\phi_1=\phi\phi_2\phi_1=\phi$. By \cite[13.1.2]{BROZ}, $C(2^{\aleph_1})$ satisfies the LP, a contradiction to theorem \ref{LP}.
\end{proof}

\end{document}